\newtheorem{theorem}{Theorem}[section]
\newtheorem{lemma}[theorem]{Lemma}
\newtheorem{proposition}[theorem]{Proposition}
\newtheorem{remark}[theorem]{Remark}
\numberwithin{equation}{section}
\DeclareMathOperator{\argmin}{argmin}
\providecommand{\keywords}[1]
{
	\textbf{\textit{Keywords--}} #1
}
\newcommand{\owntag}[2][\relax]{% \owntag[short label]{tag}
  \ifx#1\relax\relax\def\owntag@name{#2}\else\def\owntag@name{#1}\fi% base label
  \refstepcounter{equation}\tag{\theequation, #2}%
  \expandafter\ltx@label\expandafter{eq:\owntag@name}%
  \def\@currentlabel{\theequation, #2}\expandafter\ltx@label\expandafter{Eq:\owntag@name}%
  \def\@currentlabel{#2}\expandafter\ltx@label\expandafter{tag:\owntag@name}%
}
\title{A note on $L^1$-Convergence of the Empiric Minimizer for unbounded functions with fast growth}
\author{Pierre Bras\footnote{Sorbonne Universit\'e, Laboratoire de Probabilit\'es, Statistique et Mod\'elisation, UMR 8001, case 188, 4 pl. Jussieu, F-75252 Paris Cedex 5, France. E-mail: \texttt{pierre.bras@sorbonne-universite.fr}}}
\date{}
\begin{document}

\maketitle

\begin{abstract}
For $V : \mathbb{R}^d \to \mathbb{R}$ coercive, we study the convergence rate for the $L^1$-distance of the empiric minimizer, which is the true minimum of the function $V$ sampled with noise with a finite number $n$ of samples, to the minimum of $V$. We show that in general, for unbounded functions with fast growth, the convergence rate is bounded above by $a_n n^{-1/q}$, where $q$ is the dimension of the latent random variable and where $a_n = o(n^\varepsilon)$ for every $\varepsilon > 0$.
%We provide an example where the rate $n^{-1/q}$ is exactly attained. In the case where the true minimum is uniquely reached, we also give a rate of convergence of the empiric minimizer.
We then present applications to optimization problems arising in Machine Learning and in Monte Carlo simulation.
\end{abstract}

\keywords{Stochastic Optimization, Empirical measure, Empiric minimizer}

\section{Introduction}

Stochastic algorithms are powerful tools to solve complex optimization problems in high dimension. We consider a function $V : \mathbb{R}^d \to (0,\infty)$ to be minimized and we assume that there exists a random variable $Z$ with values in $\mathbb{R}^q$ and a function $v : \mathbb{R}^d \times \mathbb{R}^q \to \mathbb{R}^+$ such that
$$ \forall x \in \mathbb{R}^d, \ \mathbb{E}[v(x,Z)] = V(x) .$$
Furthermore, we focus on the case where $V$ cannot or is too costly to be computed exactly and we need to rely on noisy observations from $v$ to estimate $V$. To minimize $V$, we proceed as follows: we first draw i.i.d. samples $Z_1$, $\ldots$, $Z_n$ and then instead of $V$ we minimize the empirical version
$$ V_n(x) = \frac{1}{n} \sum_{i=1}^n v(x,Z_i) ,$$
typically with stochastic gradient descent algorithms.
Such procedures appear in Machine Learning related problems, where the original data set is too large and where we only use a (still large) subset of it, and in Monte Carlo simulation where we approximate an expectation by an empirical average.

A question that naturally arises is to prove the convergence of the empirical minimum to the true minimum. Noting $X_n^\star$ a minimizer of $V_n$ and $x^\star$ a minimizer of $V$, we give convergence rates for $\mathbb{E}|V_n(X_n^\star)-V(x^\star)|$, where the expectation is taken on the sampling of $Z_1$, $\ldots$, $Z_n$.  It turns out to be of order $a_n n^{-1/q}$, where $a_n=o(n^\varepsilon)$ for every $\varepsilon>0$. In the case where the minimum of $V$ is reached at only one point, we also give convergence rates for $\mathbb{E}|X_n^\star-x^\star|^2$. To this end, we rely on \cite{fournier2014} giving the convergence rate of the empirical measure $(1/n) \sum_{i=1}^n \delta_{Z_i}$ to the law of $Z$ for the $L^1$-Wasserstein distance and showing that in general, the convergence rate is of order $n^{-1/q}$ and that this bound is sharp, so we cannot expect a better convergence rate for our problem in general.
%Indeed, we give an counter-example where the convergence of $V_n(X_n^\star)$ to $V(x^\star)$ is exactly of order $n^{-1/q}$.

The convergence of the empiric risk minimizer was notably studied in \cite{vapnik1971} for binary classification problems, where it is shown that the quantity $|V_n(X_n^\star)-V(x^\star)|$ is closely related to $\sup_{x \in \mathbb{R}^d}|V_n(x)-V(x)|$ (see Lemma \ref{lemma:1}) and if $\mathcal{V} := \lbrace v(x, \cdot) : \ x \in \mathbb{R}^d \rbrace$ is a class of $\lbrace 0,1 \rbrace$-valued functions and that the Vapnik-Chervonenkis dimension of $\mathcal{V}$, which is an integer taking into account the combinatorial properties of $\mathcal{V}$, is finite, then $ \mathbb{E}\sup_{x \in \mathbb{R}^d}|V_n(x)-V(x)|$ converges to zero with rate $1/\sqrt{n}$ (see \cite[Lemma2.4]{bartlett2006}).
\cite{bartlett2006} and \cite{bartlett2010} extend these results to bounded, Bernstein, star-shaped class of functions using concentration inequalities.
However, this question has not been studied in the setting of Monte Carlo simulation.

\cite{biau2008} and \cite{liu2020} study the risk of the optimal quantization of a random vector by $k$-means clustering and prove an upper bound in $O(\log(n)/n)$, however these results cannot be directly applied for the convergence rate of the empirical minimizer.

In the present paper we prove the convergence rate of $n^{-1/q}$ for more general, unbounded classes of functions with fast growth and where we do not assume convexity.

The article is organized as follows. In Section \ref{sec:main_results}, we state our assumptions and then give our main results. In Section \ref{sec:applications}, we give classic examples in Machine Learning and Monte Carlo simulations where our assumptions are verified and where our results can be applied.
%In Section \ref{sec:proof}, we give the proof of the our main theorem and provide a counter-example where the convergence rate is exactly $n^{-1/q}$.

\bigskip

\textsc{Notations}

We endow the space $\mathbb{R}^d$ with the canonical Euclidean norm denoted by $| \boldsymbol{\cdot} |$. For $x \in \mathbb{R}^d$ and for $R>0$, we denote $\textbf{B}(x,R) = \lbrace y \in \mathbb{R}^d : \ |y-x| \le R \rbrace$.

For $f :\mathbb{R}^d \rightarrow \mathbb{R}$ such that $\min_{\mathbb{R}^d}(f)$ exists, we denote $\text{argmin}(f) = \left\lbrace x \in \mathbb{R}^d : \ f(x) = \min_{\mathbb{R}^d}(f) \right\rbrace$.

If $f : \mathbb{R}^d \to \mathbb{R}$ is Lipschitz-continuous, we denote by $[f]_{\text{Lip}}$ its Lipschitz constant. We often consider the Lipschitz constant of $f$ restricted to some set $\mathcal{E} \subset \mathbb{R}^d$ that we denote $[f_{|\mathcal{E}}]_{\text{Lip}}$.

We say that $f$ is coercive if $\lim_{|x| \rightarrow \infty} f(x) = + \infty$.

We denote the $L^p$-Wasserstein distance between two distributions $\pi_1$ and $\pi_2$ on $\mathbb{R}^d$:
$$ \mathcal{W}_p(\pi_1, \pi_2) = \inf \left\lbrace \left(\int_{\mathbb{R}^d} |x-y|^p \pi(dx,dy) \right)^{1/p} : \ \pi \in \mathcal{P}(\pi_1,\pi_2) \right\rbrace ,$$
where $\mathcal{P}(\pi_1,\pi_2)$ stands for the set of probability distributions on $(\mathbb{R}^d \times \mathbb{R}^d, \mathcal{B}or(\mathbb{R}^d)^{\otimes 2})$ with respective marginal laws $\pi_1$ and $\pi_2$.

For $x \in \mathbb{R}^d$, we denote by $\delta_x$ the Dirac mass at $x$.

For a set $\mathcal{E} \subset \mathbb{R}^d$, we define $\text{Radius}(\mathcal{E}) := \sup \lbrace |x| : x \in \mathcal{E} \rbrace$.

In this paper, we use the notation $C$ and $C'$ to denote positive constants, which may change from line to line.

\section{Main results}
\label{sec:main_results}

Let $V: \mathbb{R}^d \to (0,\infty)$ be $\mathcal{C}^0$ and coercive i.e. $V(x) \to +\infty$ as $|x| \to \infty$. This guarantees that $\min_{\mathbb{R}^d} V$ exists, let us denote $V^\star := \min_{\mathbb{R}^d} V > 0$. We assume that there exists a random variable $Z$ defined on some probability space $(\Omega, \mathcal{A}, \mathbb{P})$ with law $\mu$ taking its values in $\mathbb{R}^q$ and some function $v : \mathbb{R}^d \times \mathbb{R}^q \to \mathbb{R}^+$ being $\mathcal{C}^0$ such that
\begin{equation}
\forall x \in \mathbb{R}^d, \ \mathbb{E}[v(x,Z)] = V(x) .
\end{equation}

We observe $Z_1$, $\ldots$, $Z_n$ i.i.d. samples from $Z$ and let
\begin{equation}
V_n(x) := \frac{1}{n} \sum_{i=1}^n v(x, Z_i), \quad x \in \mathbb{R}^d.
\end{equation}
Under the assumption Theorem \ref{thm:main}(ii) stated right after, almost surely, for $n$ large enough $V_n$ is coercive and then let us define $X_n^\star \in \mathbb{R}^d$ to be an empiric minimizer of $V_n$ i.e.
\begin{equation}
X_n^\star \in \argmin(V_n) = \argmin_{x \in \mathbb{R}^d} \left( \frac{1}{n} \sum_{i=1}^n v(x,Z_i) \right).
\end{equation}
Our objective is to prove that minimizing $V_n$ the empirical observation of $V$ yields a good approximation of the true minimum $V^\star$, i.e. we aim at proving that $V_n(X_n^\star)$ converges to $V^\star$ and at giving bounds on $\mathbb{E}|V_n(X_n^\star)-V^\star|$.
%In the case where $\argmin(V)$ is reduced to one point i.e. $\argmin(V) = \lbrace x^\star \rbrace$, we also prove that $X_n^\star$ converges to $x^\star$ and we give bounds on $\mathbb{E}_{Z_1,\ldots,Z_n}[|X_n^\star - x^\star|]$.
For convenience we also define the convergence rate depending on the dimension, equal to $n^{-1/q}$ in general:
$$ \mathcal{R}_q(n) = \left\lbrace \begin{array}{ll}
n^{-1/2} & \text{if } q = 1, \\
n^{-1/2}\log(1+n) & \text{if } q = 2, \\
n^{-1/q} & \text{if } q > 2. \end{array} \right. $$
We define the moment of $\mu$ of order $r>0$:
$$ \mathscr{M}_r(\mu) := \int_{\mathbb{R}^q} |x|^r \mu(dx) \in [0,\infty] .$$

We now state our main results.
\begin{theorem}
\label{thm:main}
Assume that:
\begin{enumerate}[label=(\roman*)]
	\item The distribution $\mu$ satisfies $\mathscr{M}_r(\mu) < \infty$ for some $r > q/(q-1)$ if $q \ne 1$, for some $r>2$ if $q=1$.
	\item There exists a compact set $\mathcal{Q} \subset \mathbb{R}^q$ such that $x \mapsto \inf_{z \in \mathcal{Q}} v(x,z)$ is coercive and $\mathbb{P}(Z \in \mathcal{Q}) > 0$.
	\item For all $M>0$, there exists a constant $C$ such that for all compact set $\mathcal{K} \subset \mathbb{R}^q$:
	\begin{align}
	\label{eq:hyp1}
	& \sup_{x \in \textbf{B}(0,M)} \ \mathbb{E} [v(x,Z) \mathds{1}_{Z \notin \mathcal{K}} ] \le C e^{-C \textup{Radius}(\mathcal{K})^2} \\
	\label{eq:hyp2}
	& \sup_{x \in \textbf{B}(0,M)} \ [v(x,\cdot)_{|\mathcal{K}}]_{\textup{Lip}} \le C e^{C \textup{Radius}(\mathcal{K})}.
	\end{align}
	\item There exists $x^\star \in \argmin(V)$ such that $\mathbb{E}[v^2(x^\star,Z)] < \infty$.
\end{enumerate}
Then
\begin{enumerate}[label=(\alph*)]
	\item $V_n(X_n^\star)$ converges to $V^\star$ in $L^1(\mathbb{P})$ and 
	\begin{equation}
	\mathbb{E}|V_n(X_n^\star) - V^\star| \le Ca_n \mathcal{R}_q(n),
	\end{equation}
	\item If we assume furthermore that $\argmin(V) = \lbrace x^\star \rbrace$ is reduced to one point, that $\nabla^2 V(x^\star)$ exists and is positive definite and that $(\mathbb{E}|X_n|^2)$ is bounded, then $X_n^\star$ converges to $x^\star$ in $L^2(\mathbb{P})$ and
	\begin{equation}
	\mathbb{E}|X_n^\star - x^\star|^2 \le Ca_n \mathcal{R}_q(n) ,
	\end{equation}
\end{enumerate}
where $(a_n)$ is some positive sequence such that $a_n = o(n^{\varepsilon})$ for every $\varepsilon > 0$.
\end{theorem}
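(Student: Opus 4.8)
The backbone is the classical reduction behind Lemma~\ref{lemma:1}: control $|V_n(X_n^\star)-V^\star|$ by the uniform deviation of $V_n$ from $V$. Since $v$ is unbounded this has to be \emph{localized}. Write $|V_n(X_n^\star)-V^\star|=(V_n(X_n^\star)-V^\star)_++(V^\star-V_n(X_n^\star))_+$. The upper deviation is easy: $X_n^\star\in\argmin(V_n)$ gives $V_n(X_n^\star)\le V_n(x^\star)$, so by~(iv), $\mathbb E(V_n(X_n^\star)-V^\star)_+\le\|V_n(x^\star)-V^\star\|_{L^2}=n^{-1/2}\sqrt{\operatorname{Var}(v(x^\star,Z))}\le C\mathcal R_q(n)$. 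The lower deviation satisfies $(V^\star-V_n(X_n^\star))_+\le(V(X_n^\star)-V_n(X_n^\star))_+$, so one first localizes $X_n^\star$: with $g(x):=\inf_{z\in\mathcal Q}v(x,z)$ (coercive by~(ii)) and $N_n^{\mathcal Q}:=\#\{i:Z_i\in\mathcal Q\}$ one has $V_n(x)\ge(N_n^{\mathcal Q}/n)\,g(x)$; since $N_n^{\mathcal Q}/n\to\mathbb P(Z\in\mathcal Q)=:p_0>0$ with exponentially small deviations and $\mathbb P(V_n(x^\star)>2V^\star)=O(1/n)$, there is a fixed ball $\mathbf{B}(0,M)$ such that $X_n^\star\in\mathbf{B}(0,M)$ off an event $\mathcal N_n$ of probability $O(1/n)$; on $\mathcal N_n$ the lower-deviation contribution is $\le V^\star\,\mathbb P(\mathcal N_n)=O(\mathcal R_q(n))$. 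It thus remains to bound $\mathbb E\big[\sup_{x\in\mathbf{B}(0,M)}(V(x)-V_n(x))_+\big]$.

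\textbf{Key estimate.} Set $R_n:=(\log n)^{3/4}$ and $\mathcal K_n:=\mathbf{B}(0,R_n)$. For $x\in\mathbf{B}(0,M)$, since $\tfrac1n\sum_i v(x,Z_i)\mathds{1}_{Z_i\notin\mathcal K_n}\ge0$,
\[
V(x)-V_n(x)\le\Big|\mathbb E[v(x,Z)\mathds{1}_{Z\in\mathcal K_n}]-\tfrac1n\textstyle\sum_i v(x,Z_i)\mathds{1}_{Z_i\in\mathcal K_n}\Big|+\mathbb E[v(x,Z)\mathds{1}_{Z\notin\mathcal K_n}].
\]
By~\eqref{eq:hyp1} the second term is $\le Ce^{-cR_n^2}$ uniformly in $x$, hence super-polynomially small. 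For the first, let $p_n:=\mathbb P(Z\in\mathcal K_n)\ge p_0$, $\bar\mu_n:=\mathcal L(Z\mid Z\in\mathcal K_n)$, $N_n:=\#\{i:Z_i\in\mathcal K_n\}$, and $\hat\mu_n$ the renormalized empirical measure of $\{Z_i:Z_i\in\mathcal K_n\}$; the term equals $\big|p_n\!\int v(x,\cdot)\,d\bar\mu_n-(N_n/n)\!\int v(x,\cdot)\,d\hat\mu_n\big|\le p_n\big|\!\int v(x,\cdot)\,d(\bar\mu_n-\hat\mu_n)\big|+|p_n-N_n/n|\sup_{\mathcal K_n}v(x,\cdot)$. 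By~\eqref{eq:hyp2}, $v(x,\cdot)$ is $Ce^{CR_n}$-Lipschitz on $\mathcal K_n$, so the first piece is $\le Ce^{CR_n}\mathcal W_1(\hat\mu_n,\bar\mu_n)$; and since $v(x,\cdot)$ attains on the connected set $\mathcal K_n$ its $\bar\mu_n$-mean, which is $\le V(x)/p_0$, we get $\sup_{\mathcal K_n}v(x,\cdot)\le CR_ne^{CR_n}$ --- all bounds uniform over $x\in\mathbf{B}(0,M)$. Finally $\mathbb E|p_n-N_n/n|\le n^{-1/2}$, and Fournier--Guillin \cite{fournier2014} applied to the \emph{compactly supported} $\bar\mu_n$ (conditioning on $N_n$, which is $\asymp n$ off an exponentially small event) gives $\mathbb E\,\mathcal W_1(\hat\mu_n,\bar\mu_n)\le CR_n\mathcal R_q(n)$. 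Hence, with $a_n:=R_ne^{CR_n}=o(n^\varepsilon)$ for all $\varepsilon>0$,
\[
\mathbb E\Big[\sup_{x\in\mathbf{B}(0,M)}(V(x)-V_n(x))_+\Big]\le Ca_n\mathcal R_q(n)+Ca_nn^{-1/2}+Ce^{-cR_n^2}\le C'a_n\mathcal R_q(n),
\]
which together with the previous paragraph proves~(a).

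\textbf{Part (b).} From the key estimate and $\mathbb E[V_n(X_n^\star)-V^\star]\le\mathbb E[V_n(x^\star)-V^\star]=0$ we get $\mathbb E[V(X_n^\star)-V^\star]=\mathbb E[V(X_n^\star)-V_n(X_n^\star)]+\mathbb E[V_n(X_n^\star)-V^\star]\le Ca_n\mathcal R_q(n)$. Since $\argmin(V)=\{x^\star\}$, $V$ is coercive and continuous, and $\nabla^2V(x^\star)$ is positive definite, there are $c,R_0>0$ with $V(x)-V^\star\ge c|x-x^\star|^2$ for $|x-x^\star|\le R_0$ and $\psi_0:=\inf_{|x-x^\star|>R_0}(V(x)-V^\star)>0$. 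Splitting at $\{|X_n^\star-x^\star|\le R_0\}$: the first part contributes $\le c^{-1}\mathbb E[V(X_n^\star)-V^\star]\le Ca_n\mathcal R_q(n)$ to $\mathbb E|X_n^\star-x^\star|^2$; the complementary event has probability $\le\psi_0^{-1}\mathbb E[V(X_n^\star)-V^\star]=O(a_n\mathcal R_q(n))$, and its contribution is controlled by combining the rareness of this event with the assumed $L^2$-boundedness of $(X_n^\star)$ (which also yields $X_n^\star\to x^\star$ in $L^2$). This proves~(b).

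\textbf{Main obstacle.} The crux is the key estimate, and the naive approach fails: replacing $v(x,\cdot)$ by a global Lipschitz extension of $v(x,\cdot)_{|\mathcal K_n}$ and invoking Fournier--Guillin directly costs a factor $e^{CR_n}$ on the Wasserstein term \emph{and} produces a truncation bias of order $e^{CR_n}\mathbb P(|Z|>R_n)$, which under the merely polynomial moment assumption~(i) diverges for every choice $R_n\to\infty$. Working instead with the conditioned law $\bar\mu_n$ and its empirical version removes the bias entirely: the sole cost of truncation becomes the $O(n^{-1/2})$ fluctuation of the mass $N_n/n$, and $n^{-1/2}\le\mathcal R_q(n)$; one then tunes $R_n=(\log n)^{3/4}$ so that $e^{CR_n}=o(n^\varepsilon)$ while $e^{-cR_n^2}$ stays super-polynomially small. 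The remaining, more routine technicality is the control of the atypical event $\{X_n^\star\notin\mathbf{B}(0,M)\}$ (of probability $O(1/n)$) in part~(b), which is where the $L^2$-boundedness of $(X_n^\star)$ enters.
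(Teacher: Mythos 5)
Your part (a) is correct and rests on the same three pillars as the paper's proof: localization of $X_n^\star$ to a fixed ball off an event of probability $O(1/n)$ (your localization argument is essentially the paper's Lemma \ref{lemma:2}), truncation of $Z$ at a radius $R_n$ growing like a power of $\log n$ so that $e^{CR_n}=o(n^\varepsilon)$ while \eqref{eq:hyp1} makes the tail super-polynomially small, and Fournier--Guillin for the Wasserstein term. Where you genuinely diverge is the implementation of the truncation. The paper multiplies $v(x,\cdot)$ by a continuous cutoff $\phi_n$ supported in $\textbf{B}(0,\beta_n+1)$ and applies the Kantorovich--Rubinstein duality to $v(x,\cdot)\phi_n$ against the full pair $(\mu_n,\mu)$, which keeps a single application of Theorem \ref{thm:fournier} to $\mu$ itself (this is where hypothesis (i) is used) but forces it to handle the empirical tail term $\int_{z\notin\mathcal{K}_n}v\,d\mu_n$. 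You instead condition on $\{Z\in\mathcal{K}_n\}$ and compare the renormalized empirical measure $\hat\mu_n$ to the conditional law $\bar\mu_n$, paying an extra mass-fluctuation term $|p_n-N_n/n|\sup_{\mathcal{K}_n}v$ of order $a_n n^{-1/2}\le a_n\mathcal{R}_q(n)$. Your route buys two things: you only need the one-sided bound on $\sup_x(V(x)-V_n(x))_+$ (the upper deviation being killed by $V_n(X_n^\star)\le V_n(x^\star)$ and (iv)), so the nonnegative empirical tail can simply be dropped rather than averaged; and Fournier--Guillin is applied to a compactly supported law, so the moment hypothesis (i) is not even needed in your version. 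The cost is the conditioning bookkeeping ($N_n\asymp n$ off an exponentially small event, the $N_n=0$ degeneracy, the IVT argument for $\sup_{\mathcal{K}_n}v$). Both are valid; the choice $R_n=(\log n)^{3/4}$ versus the paper's $D\sqrt{\log n}$ is immaterial.

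Part (b) follows the paper's outline (quadratic lower bound near $x^\star$ from $\nabla^2V(x^\star)\succ0$, Markov-type bound away from $x^\star$), but the one step you wave at --- controlling $\mathbb{E}\bigl[|X_n^\star-x^\star|^2\mathds{1}_{\{|X_n^\star-x^\star|>R_0\}}\bigr]$ by ``combining the rareness of this event with the $L^2$-boundedness of $(X_n^\star)$'' --- does not close as stated: a uniform $L^2$ bound together with $\mathbb{P}(A_n)\to0$ does not give $\mathbb{E}[|X_n^\star|^2\mathds{1}_{A_n}]=O(a_n\mathcal{R}_q(n))$ (Cauchy--Schwarz would require a fourth moment, and would anyway only yield the square root of the rate). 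The fix is to split further on $\{X_n^\star\in\textbf{B}(0,M_0)\}$, where $|X_n^\star-x^\star|$ is deterministically bounded and the event $\{|X_n^\star-x^\star|>R_0\}$ has probability $O(a_n\mathcal{R}_q(n))$ by Markov applied to $V(X_n^\star)-V^\star$ restricted to that ball; the residual event $\{X_n^\star\notin\textbf{B}(0,M_0)\}$ is the only place the moment bound on $X_n^\star$ is needed. To be fair, the paper's own treatment of that residual term ($E_1\le(\mathbb{E}|X_n|^2+|x^\star|)\mathbb{P}(X_n^\star\notin\Theta)$, and likewise its identity $\mathbb{E}V(X_n^\star)=\mathbb{E}V_n(X_n^\star)$ in the bound for $E_2$) is not a valid inequality either, so you have reproduced a gap that is also present in the source; but you should state explicitly what integrability you are invoking and how it yields the rate rather than merely the convergence.
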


\begin{remark}
The condition that $(\mathbb{E}|X_n|^2)$ is bounded is satisfied under the assumption that there exist $A \ge 0$ and $B > 0$ such that $V(x) \ge A+B|x|^2$. Indeed, we then have
$$ \mathbb{E}|X_n|^2 \le B^{-1}(\mathbb{E}V(X_n^\star) - A) = B^{-1}(\mathbb{E}V_n(X_n^\star) - A) ,$$
which is bounded using Theorem \ref{thm:main}(a).
\end{remark}

\section{Applications}
\label{sec:applications}

\subsection{Importance sampling for Monte Carlo simulation}
\label{subsec:black_scholes}

Let us introduce a problem from \cite{lemaire2010}. Let $\varphi : \mathbb{R}^d \to \mathbb{R}$ be $\mathcal{C}^0$ and let $Z \sim \mathcal{N}(0,I_d)$. In order to estimate $\mathbb{E}[\varphi(Z)]$, we apply a variance reduction by importance sampling on the variable $\varphi(Z)$. For example, for Call and Put options on a Black-Scholes model, we have respectively
\begin{align}
\label{eq:def_call}
& \varphi(Z) = \left( \sum_{i=1}^{d} a_{i} s_{0}^{i} \exp \left[ \left( r - \sum_{j} \frac{\sigma_{i,j}^2}{2} \right) T + \sqrt{T} \sum_{j} \sigma_{i,j} Z_{j} \right] - K \right)_{+}, \\
\label{eq:def_put}
& \varphi(Z) = \left( K - \sum_{i=1}^{d} a_{i} s_{0}^{i} \exp \left[ \left( r - \sum_{j} \frac{\sigma_{i,j}^2}{2} \right) T + \sqrt{T} \sum_{j} \sigma_{i,j} Z_{j} \right] \right)_{+},
\end{align}
where $r$, $T$, $K \in \mathbb{R}^+$, $\sigma \in \mathcal{M}_d(\mathbb{R})$ is symmetric definite positive and $s_0$, $a \in \mathbb{R}^{d}$. Let us apply an
importance sampling by translation. For $x \in \mathbb{R}^d$ we have
\begin{align}
\mathbb{E}[\varphi(Z)] & = \dfrac{1}{(2\pi)^{d/2}} \int_{\mathbb{R}^{d}} \varphi(z) e^{-\frac{|z|^{2}}{2}} dz = \dfrac{1}{(2\pi)^{d/2}} \int_{\mathbb{R}^{d}} \varphi(z+x) e^{\frac{-|x|^{2}}{2} - \langle x, z \rangle } e^{-\frac{z^{2}}{2}} dz \nonumber \\
\label{eq:black_scholes_expectation}
& = \mathbb{E}\left[ \varphi(Z+x) e^{\frac{-|x|^{2}}{2} - \langle x, Z \rangle } \right]
\end{align}
and we have
\begin{align*}
\text{Var} \left[ \varphi(Z+x) e^{\frac{-|x|^{2}}{2} - \langle x, Z \rangle } \right] & = \mathbb{E} \left[ \varphi^{2}(Z+x) e^{-|x|^{2} - 2\langle x, Z \rangle } \right] - \mathbb{E} \left[ \varphi(Z+x) e^{\frac{-|x|^{2}}{2} - \langle x, Z \rangle } \right]^{2} \\
& = \mathbb{E} \left[ \varphi^{2}(Z+x) e^{-|x|^{2} - 2\langle x, Z \rangle } \right] - \mathbb{E}[\varphi(Z)]^{2} .
\end{align*}
The objective is then
\begin{equation}
\underset{x \in \mathbb{R}^d}{\text{Minimize}} \ V(x) = \mathbb{E} \left[ \varphi^{2}(Z+x) e^{-|x|^{2} - 2\langle x, Z \rangle } \right] = \mathbb{E} \left[ \varphi^{2}(Z) e^{-\langle x, Z \rangle + |x|^2/2 } \right] =: \mathbb{E}[v(x,Z)].
\end{equation}
in order to estimate $\mathbb{E}[\varphi(Z)]$ using \eqref{eq:black_scholes_expectation} by Monte Carlo simulation with a smaller variance.

\begin{proposition}
Let $Z \sim \mathcal{N}(0,I_d)$ and let $\varphi : \mathbb{R}^d \to \mathbb{R}$ be $\mathcal{C}^0$ such that there exist $A$, $B \ge 0$ such that
\begin{equation}
\label{eq:hyp_varphi}
\varphi \not\equiv 0, \quad \forall z \in \mathbb{R}^d, \ |\varphi(z)| \le Ae^{B|z|} \quad \text{and} \quad \forall r >0, \ [\varphi_{|\textbf{B}(0,r)}]_{\textup{Lip}} \le Ae^{Br}.
\end{equation}
Then the function $v : \mathbb{R}^d \times \mathbb{R}^d \to \mathbb{R}^+$ defined by
$$ v(x,z) = \varphi^2(z) e^{-\langle x,Z \rangle + |x|^2/2} $$
satisfies the assumptions of Theorem \ref{thm:main} and in particular:
$$ \mathbb{E}|V_n(X_n^\star) - V^\star| \le Ca_n \mathcal{R}_d(n). $$
\end{proposition}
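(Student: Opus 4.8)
The plan is to verify the four hypotheses (i)--(iv) of Theorem \ref{thm:main} with $q = d$ and then read off conclusion (a); no step is genuinely hard, there is only some bookkeeping with Gaussian exponential moments. Throughout, write $v(x,z) = \varphi^2(z)\, h_x(z)$ with $h_x(z) := e^{-\langle x,z\rangle + |x|^2/2} > 0$, so that $\nabla_z h_x = -x\, h_x$. Hypothesis (i) is immediate, since $\mu = \mathcal{N}(0,I_d)$ has finite moments of every order.

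For (ii): as $\varphi \not\equiv 0$ is continuous, pick $z_0$ with $\varphi(z_0) \neq 0$ and $\rho > 0$ small enough that $\varphi^2 \ge c := \varphi(z_0)^2/4 > 0$ on $\mathcal{Q} := \textbf{B}(z_0,\rho)$. For $z \in \mathcal{Q}$ we have $|z| \le |z_0| + \rho$, hence $v(x,z) \ge c\, e^{-|x|(|z_0| + \rho) + |x|^2/2}$; the right-hand side tends to $+\infty$ as $|x| \to \infty$, so $x \mapsto \inf_{z \in \mathcal{Q}} v(x,z)$ is coercive, and $\mathbb{P}(Z \in \mathcal{Q}) > 0$ by full support of the Gaussian. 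Integrating this lower bound shows $V$ is coercive; moreover $V$ is finite, continuous and strictly positive (by dominated convergence --- for $|x| \le M$ and all $z$, $v(x,z) \le A^2 e^{M^2/2} e^{(2B+M)|z|}$, which is $\mu$-integrable --- together with full support), and $v$ is $\mathcal{C}^0$ and nonnegative, so the standing assumptions of Theorem \ref{thm:main} on $V$ and $v$ are met as well.

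For (iii): fix $M > 0$, $x \in \textbf{B}(0,M)$, and set $R := \text{Radius}(\mathcal{K})$, so $\mathcal{K} \subseteq \textbf{B}(0,R)$. From $|\varphi(z)| \le Ae^{B|z|}$ and $h_x(z) \le e^{M|z| + M^2/2}$ we get $v(x,z) \le A^2 e^{M^2/2} e^{(2B+M)|z|}$. For \eqref{eq:hyp1} it suffices to treat $\mathcal{K} = \textbf{B}(0,R)$, for which $\{Z \notin \mathcal{K}\} = \{|Z| > R\}$; then $\mathbb{E}[v(x,Z)\mathds{1}_{|Z| > R}] \le A^2 e^{M^2/2}\, \mathbb{E}[e^{(2B+M)|Z|} \mathds{1}_{|Z| > R}] \le C e^{-CR^2}$ by a routine Gaussian tail bound (use $e^{\alpha |Z|} \le e^{\alpha^2} e^{|Z|^2/4}$ and $\int_{|z| > R} e^{-|z|^2/4}\, dz \le (8\pi)^{d/2} e^{-R^2/8}$). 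For \eqref{eq:hyp2}, restrict $z$ to $\textbf{B}(0,R)$ and combine the Leibniz bound $[fg]_{\textup{Lip}} \le \|f\|_\infty [g]_{\textup{Lip}} + \|g\|_\infty [f]_{\textup{Lip}}$ with: $\|\varphi^2\|_{\infty,\textbf{B}(0,R)} \le A^2 e^{2BR}$ and $[(\varphi^2)_{|\textbf{B}(0,R)}]_{\textup{Lip}} \le 2A e^{BR}[\varphi_{|\textbf{B}(0,R)}]_{\textup{Lip}} \le 2A^2 e^{2BR}$, using \eqref{eq:hyp_varphi}; and $\|h_x\|_{\infty,\textbf{B}(0,R)} \le e^{MR + M^2/2}$ and $[(h_x)_{|\textbf{B}(0,R)}]_{\textup{Lip}} \le M e^{MR + M^2/2}$, using $|\nabla_z h_x| = |x| h_x \le M h_x$. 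The product is then $\le C e^{(2B+M)R}$, which gives \eqref{eq:hyp2} (constants depending only on $M$, $A$, $B$).

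Finally, (iv): for every $x \in \mathbb{R}^d$, $v^2(x,z) = \varphi^4(z) e^{-2\langle x,z\rangle + |x|^2} \le A^4 e^{|x|^2} e^{(4B + 2|x|)|z|}$, which has finite $\mu$-expectation; in particular $\mathbb{E}[v^2(x^\star,Z)] < \infty$ for any $x^\star \in \argmin(V)$, a set that is nonempty by the coercivity and continuity of $V$ established above. All four hypotheses hold, and Theorem \ref{thm:main}(a) applies with $q = d$, yielding $\mathbb{E}|V_n(X_n^\star) - V^\star| \le Ca_n \mathcal{R}_d(n)$. The only point requiring a little care is the pair \eqref{eq:hyp1}--\eqref{eq:hyp2}: one must check that the $x$-dependent factor $h_x$ contributes only an \emph{exponential} --- not Gaussian --- growth in $R$ to the Lipschitz constant, uniformly over $\textbf{B}(0,M)$ (which holds precisely because $|\nabla_z h_x| \le M h_x$ there), while the Gaussian tail of $Z$ comfortably absorbs the exponential growth of $v(x,\cdot)$ coming from \eqref{eq:hyp_varphi}.
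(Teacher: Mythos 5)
Your proposal is correct and follows essentially the same route as the paper: verify hypotheses (i)--(iv) of Theorem \ref{thm:main} directly, using the full support of the Gaussian for (ii), a Gaussian tail estimate for \eqref{eq:hyp1} with $\mathcal{K}=\textbf{B}(0,R)$, and a Leibniz-type bound on the Lipschitz constant of $\varphi^2(\cdot)h_x(\cdot)$ for \eqref{eq:hyp2}. You are in fact somewhat more thorough than the paper, which leaves (i) and (iv) unverified and implicitly assumes $\varphi>0$ on $\mathcal{Q}$ where your use of $\varphi^2$ is cleaner.
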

\begin{proof}
$\bullet$ Since $\varphi \not\equiv 0$, there exists a compact set $\mathcal{Q} \subset \mathbb{R}^q$ with non empty interior such that $\inf_{z \in \mathcal{Q}} \varphi(z) > 0$ and since $Z \sim \mathcal{N}(0,I_q)$, we have $\mathbb{P}(Z \in \mathcal{Q}) > 0$.

$\bullet$ For $\mathcal{K} = \textbf{B}(0,R)$ we have
\begin{align*}
& \mathbb{E}\left[\sup_{x \in \textbf{B}(0,M)} v(x,Z) \mathds{1}_{Z \notin \mathcal{K}} \right] = C \int_{\textbf{B}(0,R)^c} \sup_{x \in \textbf{B}(0,M)} \varphi^2(z) e^{-\langle x,z \rangle + |x|^2/2 - |z|^2/2} dz \\
& \quad \le C \int_{\textbf{B}(0,R)^c} e^{2B|z| +M|z| + |M|^2/2 - |z|^2/2} dz \le C \int_{\textbf{B}(0,R)^c} e^{C|z|-|z|^2/2} dz \le Ce^{-C'R^2}.
\end{align*}

$\bullet$ For $\mathcal{K} = \textbf{B}(0,R)$ and for $x \in \textbf{B}(0,M)$ we have
\begin{align*}
[v(x,\cdot)_{|\mathcal{K}}]_{\text{Lip}} & \le \sup_{z \in \mathcal{K}} \left( \varphi^2(z)|x|e^{|x||z|} + 2\varphi(z) [\varphi_{\mathcal{K}}]_{\text{Lip}} e^{|x||z|} \right) \\
& \le Ce^{2BR+MR} + Ce^{(B+B')R+MR} \le Ce^{C'R} .
\end{align*}
\end{proof}

\begin{remark}
If $\varphi$ is defined as a Black-Scholes Call \eqref{eq:def_call} or Put \eqref{eq:def_put} then it satisfies the assumption \eqref{eq:hyp_varphi}.
\end{remark}

\subsection{Neural Networks}

Let us consider a regression problem with a fully connected neural network with quadratic loss and quadratic regularization. Let $\varphi : \mathbb{R} \rightarrow \mathbb{R}$ be the sigmoid function $x \mapsto (1+e^{-x})^{-1}$.
Let $K \in \mathbb{N}$ be the number of layers and for $k=1$, $\ldots$, $K$, let $d_k \in \mathbb{N}$ be the size of the $k^{\text{th}}$. For $u \in \mathbb{R}^{d_{k-1}}$ and for $x \in \mathcal{M}_{d_k,d_{k-1}}(\mathbb{R})$, we define $\varphi_{x}(u) := [\varphi([x \cdot u]_i)]_{1 \le i \le d_k}$. The output of the neural network is
\begin{align*}
& \psi : \mathbb{R}^{d_1,d_0} \times \cdots \times \mathbb{R}^{d_K,d_{K-1}} \times \mathbb{R}^{d_0} \to \mathbb{R}^{d_K} \\
& \psi (x_1,\ldots,x_K,u) = \psi(x,u) = x_K \cdot \varphi_{x_{K-1}} \circ \ldots \circ \varphi_{x_1}(u).
\end{align*}
Let $u_i \in \mathbb{R}^{d_0}$ and $y_i \in \mathbb{R}^{d_K}$ be the data for $1 \le i \le N$. The objective is
$$ \underset{x_1, \ldots, x_K}{\text{minimize}} \quad V(x) := \frac{1}{2N} \sum_{i=1}^N |\psi(x_1, \ldots, x_K,u_i) - y_i|^2 + \frac{\lambda}{2}|x|^2 ,$$
where $x = (x_1, \ldots, x_K)$ and where $\lambda > 0$ is the regularization parameter. We denote $d := d_0 d_1 + \cdots + d_{K-1}d_K$ the dimension of the optimization problem.

\begin{proposition}
Let $\mu := (1/N) \sum_{i=1}^N \delta_{u_i,y_i}$ be the probability measure on $\mathbb{R}^{d_0} \times \mathbb{R}^{d_K}$ associated to the data, let $Z \sim \mu$ and let
$$ v : \mathbb{R}^d \times \mathbb{R}^{d_0} \times \mathbb{R}^{d_K} \to \mathbb{R}^+, \quad (x,u,y) \mapsto \frac{1}{2} |\psi(x,u)-y|^2 + \frac{\lambda}{2} |x|^2 .$$
Then $Z$ and $v$ satisfy the assumptions of Theorem \ref{thm:main}.
\end{proposition}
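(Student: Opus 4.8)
The plan is to verify the four hypotheses (i)--(iv) of Theorem~\ref{thm:main} in turn, the organizing remark being that here the latent variable $Z\sim\mu$ is supported on the \emph{finite} data set $\{(u_i,y_i):1\le i\le N\}$, hence has compact support, and that coercivity comes for free from the Tikhonov term $\tfrac\lambda2|x|^2$. First I would record the soft facts: $\psi$, being a composition of matrix multiplications and componentwise sigmoids, is smooth in $(x,u)$, so $v$ and $V$ are $\mathcal C^0$; and $v(x,u,y)\ge\tfrac\lambda2|x|^2$, hence $V(x)\ge\tfrac\lambda2|x|^2$, so $V$ is coercive and, since $V(x)>0$ for $x\neq0$ and $V(0)=\tfrac1{2N}\sum_i|y_i|^2$, strictly positive as soon as the $y_i$ are not all zero (the degenerate case being discardable, or curable by adding an arbitrarily small positive constant to $v$, which does not affect the optimization).

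With this in hand, (i), (ii), (iv) are essentially immediate. For (i), $\mu$ being finitely supported, $\mathscr M_r(\mu)<\infty$ for every $r>0$. For (ii), I would take $\mathcal Q=\{(u_1,y_1)\}$ (a compact set): then $\mathbb P(Z\in\mathcal Q)\ge1/N>0$, and $\inf_{z\in\mathcal Q}v(x,z)=v(x,u_1,y_1)\ge\tfrac\lambda2|x|^2$ is coercive. For (iv), $\argmin(V)\neq\varnothing$ by continuity and coercivity of $V$, and $\mathbb E[v^2(x^\star,Z)]=\tfrac1N\sum_i v^2(x^\star,u_i,y_i)<\infty$ for any $x^\star$. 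There remains (iii); as in the proof of the previous proposition it suffices to treat $\mathcal K=\mathbf B(0,R)$, $R>0$ (for \eqref{eq:hyp2} this even yields the general compact case, by monotonicity of $[\,\cdot\,]_{\mathrm{Lip}}$ under restriction to smaller sets). Since $\mathrm{supp}(\mu)\subset\mathbf B(0,R_0)$ with $R_0:=\max_i|(u_i,y_i)|$, the left side of \eqref{eq:hyp1} is $0$ for $R>R_0$ and $\le\sup_{\mathbf B(0,M)}V<\infty$ for $R\le R_0$, so \eqref{eq:hyp1} holds with a suitable $M$-dependent constant.

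The one genuine computation is \eqref{eq:hyp2}. Writing $z=(u,y)$ and differentiating, $\partial_y v(x,u,y)=y-\psi(x,u)$ and $\partial_u v(x,u,y)=\big(D_u\psi(x,u)\big)^{\!\top}\big(\psi(x,u)-y\big)$, where $D_u\psi$ is the Jacobian of $u\mapsto\psi(x,u)$; hence on $\mathbf B(0,R)$,
\[
[v(x,\cdot)_{|\mathbf B(0,R)}]_{\mathrm{Lip}}\ \le\ \big(1+\sup_u\|D_u\psi(x,u)\|_{\mathrm{op}}\big)\big(R+\sup_u|\psi(x,u)|\big).
\]
The point I would stress is that for $x=(x_1,\dots,x_K)\in\mathbf B(0,M)$ both suprema over $u$ are bounded by a constant depending only on $M$ and the architecture, \emph{uniformly in $u\in\mathbb R^{d_0}$}: the penultimate activation $\varphi_{x_{K-1}}\circ\cdots\circ\varphi_{x_1}(u)$ lies in $[0,1]^{d_{K-1}}$ because $\varphi$ is $[0,1]$-valued, so $|\psi(x,u)|\le\|x_K\|_{\mathrm{op}}\sqrt{d_{K-1}}\le M\sqrt{d_{K-1}}$; and since $|\varphi'|\le\tfrac14$, the input-Jacobian of each layer $\varphi_{x_k}$ has operator norm $\le\tfrac14\|x_k\|_{\mathrm{op}}\le\tfrac M4$, so the chain rule gives $\|D_u\psi(x,u)\|_{\mathrm{op}}\le M(M/4)^{K-1}$. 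Substituting, $[v(x,\cdot)_{|\mathbf B(0,R)}]_{\mathrm{Lip}}\le C_M(1+R)\le Ce^{CR}$ uniformly in $x\in\mathbf B(0,M)$, which is \eqref{eq:hyp2}.

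The main -- and essentially only -- obstacle is this uniform-in-$u$ control of $\psi$ and of its input-Jacobian through the composition of $K$ layers; this is precisely where the boundedness of the sigmoid and of its derivative enters, and it is what makes the Lipschitz constant of $v(x,\cdot)$ independent of the data radius $R_0$. Everything else is a soft consequence of $\mu$ being compactly supported together with the coercivity forced by the regularization.
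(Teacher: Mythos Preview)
Your proof is correct and follows essentially the same route as the paper's: verify (ii) via the regularization term $\tfrac\lambda2|x|^2$, dispatch \eqref{eq:hyp1} by the compact support of $\mu$, and establish \eqref{eq:hyp2} by differentiating $v$ in $(u,y)$ and bounding the chain-rule product through the boundedness of the sigmoid and its derivative. If anything, your version is more complete: the paper does not explicitly check (i) and (iv), does not address the positivity requirement $V>0$, and its Lipschitz estimate is sketchier (e.g.\ it writes $|\psi(x,u)|\le\|x_K\|$ without the $\sqrt{d_{K-1}}$ factor you correctly include).
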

\begin{proof}
$\bullet$ The condition $(ii)$ is satisfied taking any compact set $\mathcal{Q} \subset	\mathbb{R}^{d_0} \times \mathbb{R}^{d_K}$ containing at least one data $(u_i,y_i)$ and noting that $v(x,u,y) \ge (\lambda/2)|x|^2$.

$\bullet$ The condition \eqref{eq:hyp1} is satisfied because $Z$ has compact support.

$\bullet$
%For $\mathcal{K} = \textbf{B}(0,R) \times \textbf{B}(0,R) \subset \mathbb{R}^{d_0} \times \mathbb{R}^{d_K}$
We have
$$ \partial_y v(x,u,y) = |\psi(x,u)-y| \le |y| + \|x_K\| $$
and
\begin{align*}
\partial_u v(x,u,y) = |\psi(x,u)-y| \cdot \|\partial_u \psi(x,u)| \le (|y| + \|x_K\|) \cdot \|x_K\| \cdot |\varphi'_{x_{K-1}} \circ \varphi_{x_{K-2}} \cdots \circ \varphi_{x_1}(u)| \cdots |\varphi'_{x_1}(u)|,
\end{align*}
and since all the functions $\varphi'_{x_k}$ are bounded for $x \in \textbf{B}(0,M)$, we obtain \eqref{eq:hyp2}.
\end{proof}

% Parler de l'equilibrage

\section{Proof of Theorem \ref{thm:main}}
\label{sec:proof}

\subsection{Convergence of the empirical measure for the $L^1$-Wasserstein distance}

Let $\mu$ be a probability distribution on $\mathbb{R}^q$ and let $(Z_n)_{n \ge 1}$ be an i.i.d. sequence of random variables of law $\mu$. For $n \ge 1$, let us denote the empirical measure
$$ \mu_n := \frac{1}{n} \sum_{i=1}^n \delta_{Z_i} .$$
\cite[Theorem 1]{fournier2014} gives the rate of convergence for the $L^p$-Wasserstein distance of $\mu_n$ to $\mu$.
\begin{theorem}
\label{thm:fournier}
Let $p>0$ and assume that for some $r>p$, $\mathscr{M}_r(\mu) < \infty$. Then there exists a constant $C>0$ depending only on $q$, $p$, $r$ such that for all $n \ge 1$,
$$ \mathbb{E}[\mathcal{W}_p(\mu_n, \mu)] \le C\mathscr{M}_r^{p/r}(\mu) \left\lbrace \begin{array}{ll} n^{-1/2} + n^{-(r-p)/r} & \text{if } p > q/2 \text{ and } r \ne 2p, \\
n^{-1/2}\log(1+n) + n^{-(r-p)/r} & \text{if } p=q/2 \text{ and } r \ne 2p, \\
n^{-p/q} + n^{-(r-p)/r} & \text{if } p \in (0,q/2) \text{ and } r \ne q/(q-p).
\end{array} \right. $$
where the expectation is taken on the samples $X_1, \ldots, X_n$.
\end{theorem}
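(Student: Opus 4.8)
Since the paper only quotes this estimate — it is the theorem of Fournier and Guillin — let me sketch how I would prove it; the plan is the multi-scale (dyadic-tree) argument.

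First I would dispose of the unbounded support. Writing $\mathbb{R}^q=\textbf{B}(0,1)\cup\bigcup_{k\ge1}\mathcal{S}_k$ with $\mathcal{S}_k=\textbf{B}(0,2^k)\setminus\textbf{B}(0,2^{k-1})$, Markov's inequality gives $\mu(\mathcal{S}_k)\le 2^{-(k-1)r}\mathscr{M}_r(\mu)$. Coupling $\mu_n$ to $\mu$ shell by shell costs, on $\mathcal{S}_k$, at most $2^{kp}$ times the transported mass; summing the contributions of the shells with $k$ beyond a threshold $k_n\asymp\frac{1}{r}\log_2 n$ produces the additive term $C\mathscr{M}_r^{p/r}(\mu)\,n^{-(r-p)/r}$, and after rescaling it remains to bound $\mathbb{E}[\mathcal{W}_p^p]$ between a probability measure supported in the unit cube $Q$ and its empirical version.

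Next comes the deterministic core. For $\ell\ge0$ let $\mathcal{P}_\ell$ be the partition of $Q$ into its $2^{q\ell}$ dyadic subcubes of side $2^{-\ell}$ (a nested family). I would use the bound
\[
\mathcal{W}_p^p(\mu,\nu)\ \le\ C_{p,q}\Bigl(2^{-p\ell_0}+\sum_{\ell\ge\ell_0}2^{-p\ell}\sum_{F\in\mathcal{P}_\ell}\bigl|\mu(F)-\nu(F)\bigr|\Bigr)
\]
for any truncation level $\ell_0$, proved by a hierarchical coupling that, scale by scale, repairs inside each cube the mass mismatch between $\mu$ and $\nu$ on its children at cost $\lesssim 2^{-p\ell}\sum_{F\in\mathcal{P}_{\ell+1}}|\mu(F)-\nu(F)|$. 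Taking $\nu=\mu_n$, everything reduces to the local discrepancies $\mu_n(F)-\mu(F)$: since $n\mu_n(F)\sim\mathrm{Bin}(n,\mu(F))$ one has $\mathbb{E}|\mu_n(F)-\mu(F)|\le\min(\sqrt{\mu(F)/n},\,2\mu(F))$, hence by Cauchy--Schwarz over the $\le 2^{q\ell}$ cubes $\mathbb{E}\sum_{F\in\mathcal{P}_\ell}|\mu_n(F)-\mu(F)|\le C\min(2^{q\ell/2}n^{-1/2},\,2)$. Plugging this in, optimising over $\ell_0$, and splitting the scale sum at the crossover $2^{\ell^\star}\asymp n^{1/q}$ gives the three announced regimes from the sign of $q/2-p$ (geometric high-scale tail for $p>q/2$, a logarithmic defect at $p=q/2$, low-scale dominance $\asymp n^{-p/q}$ for $p<q/2$); adding back the shell error finishes the bound. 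For $p=1$ — the only case used afterwards — $\mathcal{W}_1^1=\mathcal{W}_1$ and this is exactly the stated inequality; for general $p$ one then passes to $\mathbb{E}[\mathcal{W}_p]$ by Jensen.

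The hard part will be the probabilistic step when $p>1$: the first-moment estimate above is enough to reach the sharp rate only for $p\le1$, and to obtain the stated exponents for larger $p$ — notably the $n^{-1/2}$ (rather than $n^{-1/(2p)}$) behaviour when $p>q/2$, and the precise transition at $p=q/2$ — one must feed the multi-scale inequality with a genuine concentration inequality for the empirical cube-masses (bounded differences / Talagrand type), not merely with their expectations. Making the tail reduction of the first step quantitatively match the $n^{-(r-p)/r}$ term under only $\mathscr{M}_r(\mu)<\infty$ is the other place that needs care.
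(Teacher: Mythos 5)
The paper does not prove this statement: it is quoted verbatim (up to notation) from Fournier and Guillin \cite{fournier2014}, Theorem 1, so there is no internal proof to compare against. That said, your sketch is essentially the actual Fournier--Guillin argument: the shell decomposition $\mathcal{S}_k=\textbf{B}(0,2^k)\setminus\textbf{B}(0,2^{k-1})$ with the Markov bound $\mu(\mathcal{S}_k)\le 2^{-(k-1)r}\mathscr{M}_r(\mu)$ producing the $n^{-(r-p)/r}$ term, the nested dyadic-partition bound on the transport cost, the binomial estimate $\mathbb{E}|\mu_n(F)-\mu(F)|\le\min\bigl(\sqrt{\mu(F)/n},\,2\mu(F)\bigr)$ combined with Cauchy--Schwarz over the $2^{q\ell}$ cubes, and the split of the scale sum at $2^{\ell^\star}\asymp n^{1/q}$ yielding the three regimes. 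Your final paragraph, however, misdiagnoses the difficulty. No concentration inequality is needed for the expectation bound: the multi-scale inequality controls the transport \emph{cost} $\mathcal{W}_p^p$, which is linear in the local discrepancies $|\mu_n(F)-\mu(F)|$, so taking expectations termwise with the first-moment estimate already delivers all three regimes, including the geometric domination by $\ell=0$ that gives $n^{-1/2}$ when $p>q/2$ (concentration enters only in Fournier--Guillin's deviation bounds, their Theorem 2). What your unease is really detecting is a notational slip in the quoted statement: Fournier--Guillin's result bounds $\mathbb{E}[\mathcal{W}_p^p(\mu_n,\mu)]$ (their $\mathcal{T}_p$, the cost without the $1/p$-th root), whereas with this paper's convention for $\mathcal{W}_p$ the displayed left-hand side reads $\mathbb{E}[\mathcal{W}_p(\mu_n,\mu)]$; Jensen would then only give the exponents divided by $p$. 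For $p=1$, the only case used in the paper, the two conventions coincide and the statement is correct as written.
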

Right after \cite[Theorem 1]{fournier2014} are given right examples proving that these bounds are sharp, which indicates us that we cannot get better bounds in Theorem \ref{thm:main} in general.

We now consider the case $p=1$. Following the assumption Theorem \ref{thm:main}(i) stating that $\mathscr{M}_r(\mu)<\infty$ for some $r > q/(q-1)$ if $q \ne 1$ and for some $r>2$ if $q = 1$, the term $n^{-(r-1)/1}$ becomes negligible and then
$$ \mathbb{E}[\mathcal{W}_1(\mu_n, \mu)] \le C_{\mu} \mathcal{R}_q(n) .$$
Let us recall the Kantorovich-Rubinstein representation of the $L^1$-Wasserstein distance \cite[Equation (6.3)]{villani2009}:
\begin{equation}
\label{eq:kantorovich}
\mathcal{W}_1(\mu_1, \mu_2) = \sup \left\lbrace \int_{\mathbb{R}^q} f(x)(\mu_1 - \mu_2)(dx) : \ f : \mathbb{R}^d \to \mathbb{R}, \ [f]_{\text{Lip}}=1 \right\rbrace.
\end{equation}

\subsection{Proof of Theorem \ref{thm:main}(a)}

\begin{lemma}
\label{lemma:1}
Assume that $X_n^\star \in \Theta$ and that $x^\star \in \Theta$ for some $x^\star \in \argmin(V)$ and for some set $\Theta \subset \mathbb{R}^d$. Then we have
$$ |V_n(X_n^\star) - V^\star| \le \sup_{x \in \Theta} |V_n(x) - V(x)| .$$
\end{lemma}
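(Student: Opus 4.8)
The plan is to bound $|V_n(X_n^\star) - V^\star|$ from above by splitting it using the triangle inequality with the intermediate quantities $V_n(x^\star)$ and $V(X_n^\star)$, and then to exploit the minimality of each of the two points with respect to its own objective function. Concretely, I would first observe that since $X_n^\star \in \argmin(V_n)$ and $x^\star \in \Theta$, we have $V_n(X_n^\star) \le V_n(x^\star)$; and since $x^\star \in \argmin(V)$ and $X_n^\star \in \Theta$, we have $V(x^\star) = V^\star \le V(X_n^\star)$.

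Next I would treat the two signs of the difference separately. If $V_n(X_n^\star) \ge V^\star$, then
\begin{equation*}
0 \le V_n(X_n^\star) - V^\star \le V_n(x^\star) - V^\star = V_n(x^\star) - V(x^\star) \le \sup_{x \in \Theta} |V_n(x) - V(x)|,
\end{equation*}
using $V_n(X_n^\star) \le V_n(x^\star)$ in the first inequality and $x^\star \in \Theta$ in the last. If instead $V_n(X_n^\star) \le V^\star$, then
\begin{equation*}
0 \le V^\star - V_n(X_n^\star) \le V(X_n^\star) - V_n(X_n^\star) \le \sup_{x \in \Theta} |V_n(x) - V(x)|,
\end{equation*}
using $V^\star \le V(X_n^\star)$ in the first inequality and $X_n^\star \in \Theta$ in the last. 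Combining the two cases gives the claimed bound.

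There is essentially no real obstacle here: the statement is the classical "empirical risk is close to true risk implies empirical minimizer is nearly optimal" argument, and the only mild subtlety is making sure that in each case one uses the minimality of the \emph{correct} point together with the membership of the \emph{other} point in $\Theta$, so that the supremum over $\Theta$ legitimately dominates both one-sided bounds. I would also note for completeness that the hypothesis only requires $X_n^\star$ and some $x^\star \in \argmin(V)$ to lie in $\Theta$, not that $\Theta$ be the whole space, which is exactly what makes the lemma usable later with $\Theta$ a suitable compact ball.
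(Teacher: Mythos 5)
Your proof is correct and follows essentially the same route as the paper's: the same two-case split on the sign of $V_n(X_n^\star) - V^\star$, each case using the minimality of one point for its own objective together with the membership of the other point in $\Theta$. Nothing further is needed.
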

\begin{proof}
If $V_n(X_n^\star) \ge V^\star$ then
$$ |V_n(X_n^\star) - V^\star| = V_n(X_n^\star) - V^\star \le V_n(x^\star) - V(x^\star) = |V_n(x^\star) - V(x^\star)| .$$
If $V_n(X_n^\star) \le V^\star$ then
$$ |V_n(X_n^\star) - V^\star| = V(x^\star) - V_n(X_n^\star) \le V(X_n^\star) - V_n(X_n^\star) = |V(X_n^\star) - V_n(X_n^\star)| .$$
\end{proof}

From now, our strategy of proof will be the following. If $X_n^\star$ and $x^\star$ are in some set $\Theta \subset	\mathbb{R}^d$, then using \eqref{eq:kantorovich} we have:
\begin{align*}
\mathbb{E}|V_n(X_n^\star) - V^\star| & \le \mathbb{E} \sup_{x \in \Theta} |V_n(x) - V(x)| \\
& = \mathbb{E} \sup_{x \in \Theta}\left| \int_{\mathbb{R}^q} v(x,z)\mu_n(dz) - \int_{\mathbb{R}^q} v(x,z) \mu(dz)\right| \\
& \le \left( \sup_{x \in \Theta} [v(x,\cdot)]_{\text{Lip}} \right) \mathbb{E}[\mathcal{W}_1(\mu_n,\mu)].
\end{align*}
So if $v(x,\cdot)$ is Lipschitz-continuous and uniformly for $x \in \Theta$, then we can directly apply Theorem \ref{thm:fournier} to directly get a bound for $\mathbb{E}|V_n(X_n^\star) - V^\star|$. However as in Section \ref{subsec:black_scholes}, this is not the case in general.

We first prove we can consider that $X_n^\star \in \Theta$ where $\Theta = \textbf{B}(0,M)$ for some $M>0$.
\begin{lemma}
\label{lemma:2}
Let $M_0>0$ such that $x^\star \in \textbf{B}(0,M_0)$ and
$$ \inf_{x \notin \textbf{B}(0,M_0)} \inf_{z \in \mathcal{Q}} v(x,z) > 4V^\star / \mathbb{P}(Z \in \mathcal{Q}) .$$
Note that such $M_0$ exists using the assumption Theorem \ref{thm:main}(ii). Then
\begin{equation}
\mathbb{E}|(V_n(X_n^\star)-V^\star) \mathds{1}_{|X_n^\star|>M}| \le Cn^{-1}.
\end{equation}
\end{lemma}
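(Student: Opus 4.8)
The plan is to control the contribution to $\mathbb{E}|V_n(X_n^\star)-V^\star|$ coming from the event that the empiric minimizer escapes a fixed ball $\textbf{B}(0,M)$, for $M\ge M_0$ to be chosen. The key point is that on this event, $V_n(X_n^\star)$ cannot be much smaller than $V^\star$ unless the empirical measure $\mu_n$ puts very little mass on $\mathcal{Q}$, and conversely $V_n(X_n^\star)\le V_n(x^\star)$ is typically not far above $V^\star$. So the escape event is rare.

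First I would use assumption (ii) to pick $M_0$ as in the statement, so that $\inf_{x\notin\textbf{B}(0,M_0)}\inf_{z\in\mathcal{Q}}v(x,z) > 4V^\star/p$ where $p:=\mathbb{P}(Z\in\mathcal{Q})>0$. Then for $M\ge M_0$, on the event $\{|X_n^\star|>M\}$ we have, using $v\ge 0$ and keeping only the samples $Z_i\in\mathcal{Q}$,
$$ V_n(X_n^\star) \ge \frac{1}{n}\sum_{i=1}^n v(X_n^\star,Z_i)\mathds{1}_{Z_i\in\mathcal{Q}} \ge \Big(\inf_{x\notin\textbf{B}(0,M_0)}\inf_{z\in\mathcal{Q}}v(x,z)\Big)\cdot \frac{N_n}{n}, $$
where $N_n:=\#\{i: Z_i\in\mathcal{Q}\}$. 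Hence on $\{|X_n^\star|>M\}\cap\{N_n/n\ge p/2\}$ we get $V_n(X_n^\star) > 2V^\star$. On the other hand, $V_n(X_n^\star)\le V_n(x^\star)$ always, and $\mathbb{E}V_n(x^\star)=V^\star$. I would combine these: the escape event (intersected with $N_n/n\ge p/2$) forces $V_n(x^\star)>2V^\star$, i.e. $V_n(x^\star)-V^\star > V^\star$, which by Chebyshev (using assumption (iv), $\mathbb{E}[v^2(x^\star,Z)]<\infty$) has probability $\le \mathrm{Var}(V_n(x^\star))/(V^\star)^2 = \mathrm{Var}(v(x^\star,Z))/(n(V^\star)^2)=O(n^{-1})$. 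Meanwhile $\mathbb{P}(N_n/n<p/2)$ is exponentially small in $n$ by a Chernoff/Hoeffding bound for the i.i.d. Bernoulli$(p)$ variables $\mathds{1}_{Z_i\in\mathcal{Q}}$, hence also $O(n^{-1})$. So $\mathbb{P}(|X_n^\star|>M)=O(n^{-1})$.

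It then remains to upgrade this probability bound to a bound on $\mathbb{E}|(V_n(X_n^\star)-V^\star)\mathds{1}_{|X_n^\star|>M}|$, which requires controlling the size of $V_n(X_n^\star)-V^\star$ on the escape event, not merely the probability. Here I would again use $0\le V_n(X_n^\star)\le V_n(x^\star)$ to write $|V_n(X_n^\star)-V^\star|\le V_n(x^\star)+V^\star$, so that
$$ \mathbb{E}|(V_n(X_n^\star)-V^\star)\mathds{1}_{|X_n^\star|>M}| \le \mathbb{E}[(V_n(x^\star)+V^\star)\mathds{1}_{|X_n^\star|>M}] \le V^\star\,\mathbb{P}(|X_n^\star|>M) + \mathbb{E}[V_n(x^\star)\mathds{1}_{|X_n^\star|>M}]. $$
The first term is $O(n^{-1})$ by the above. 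For the second, Cauchy--Schwarz gives $\mathbb{E}[V_n(x^\star)\mathds{1}_{|X_n^\star|>M}]\le (\mathbb{E}V_n(x^\star)^2)^{1/2}\,\mathbb{P}(|X_n^\star|>M)^{1/2}$, and $\mathbb{E}V_n(x^\star)^2 = \mathrm{Var}(V_n(x^\star))+(V^\star)^2$ is bounded (again assumption (iv)), while $\mathbb{P}(|X_n^\star|>M)^{1/2}=O(n^{-1/2})$. That only yields $O(n^{-1/2})$, which is not good enough. To recover $O(n^{-1})$ I would instead split more carefully on the intermediate event $\{N_n/n\ge p/2\}$: on that event the escape forces $V_n(x^\star)>2V^\star$, and $\mathbb{E}[V_n(x^\star)\mathds{1}_{V_n(x^\star)>2V^\star}] = \mathbb{E}[(V_n(x^\star)-V^\star)\mathds{1}_{V_n(x^\star)-V^\star>V^\star}] \le \mathbb{E}[(V_n(x^\star)-V^\star)^2]/V^\star = \mathrm{Var}(v(x^\star,Z))/(nV^\star)=O(n^{-1})$; on the complementary event $\{N_n/n<p/2\}$, whose probability is exponentially small, the bound $\mathbb{E}[V_n(x^\star)\mathds{1}_{N_n/n<p/2}]\le(\mathbb{E}V_n(x^\star)^2)^{1/2}\mathbb{P}(N_n/n<p/2)^{1/2}$ is exponentially small too. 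Adding these gives $Cn^{-1}$ as claimed.

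The main obstacle is precisely this last point: a crude Cauchy--Schwarz on the escape event loses a factor $n^{-1/2}$, so one has to exploit that on the ``good'' part of the escape event $V_n(x^\star)$ is not merely nonzero but actually bounded below by $2V^\star$ (a genuine gap), which turns an $L^1$-tail estimate into a second-moment estimate and restores the $n^{-1}$ rate. One small technical care point is that $X_n^\star$ is only well-defined for $n$ large enough (when $V_n$ is coercive), as noted in the paper; since we only claim an asymptotic $O(n^{-1})$ bound this causes no difficulty, and the coercivity of $V_n$ for large $n$ itself follows from the same Chernoff bound on $N_n$ together with assumption (ii).
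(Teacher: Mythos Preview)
Your argument follows essentially the same route as the paper's: show $\mathbb{P}(|X_n^\star|>M_0)\le C/n$ by observing that the escape event forces either $V_n(x^\star)\ge 2V^\star$ or $N_n/n<\kappa/2$, bound each probability (the paper uses Chebyshev for both; you use Chernoff/Hoeffding for the Bernoulli part, which is a harmless upgrade), and then pass from the probability bound to the expectation via $0\le V_n(X_n^\star)\le V_n(x^\star)$ and Cauchy--Schwarz. The one substantive difference is in this last step: you correctly point out that a crude Cauchy--Schwarz on the escape event only yields $O(n^{-1/2})$, and you supply a refinement --- splitting once more on $\{N_n/n\ge p/2\}$ and using that on this part the escape forces $V_n(x^\star)-V^\star>V^\star$, so a second-moment estimate applies --- that recovers $O(n^{-1})$. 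The paper's written proof applies Cauchy--Schwarz directly, and its final displayed chain passes from $C\sqrt{\mathbb{E}[V_n^2(X_n^\star)]}\,n^{-1/2}$ to $C\,\mathrm{Var}[v(x^\star,Z)]\,n^{-1}$ without justification; taken literally that step only gives $O(n^{-1/2})$ (which, incidentally, already suffices for the use made of the lemma since $n^{-1/2}\le \mathcal{R}_q(n)$ for all $q$). So your version is in fact more careful on this point.

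One small slip to fix: the equality you write,
\[
\mathbb{E}\big[V_n(x^\star)\mathds{1}_{V_n(x^\star)>2V^\star}\big]=\mathbb{E}\big[(V_n(x^\star)-V^\star)\mathds{1}_{V_n(x^\star)-V^\star>V^\star}\big],
\]
is off by the term $V^\star\,\mathbb{P}(V_n(x^\star)>2V^\star)=O(n^{-1})$; once you add it back, the bound goes through as you intend.
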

\begin{proof}
We define $\kappa := \mathbb{P}(Z \in \mathcal{Q}) > 0$. We have
\begin{align*}
\mathbb{E}|(V_n(X_n^\star)-V^\star) \mathds{1}_{|X_n^\star|>M_0}| & \le \mathbb{E}[V_n(X_n^\star)\mathds{1}_{|X_n^\star|>M_0}] + V^\star \mathbb{P}\left(|X_n^\star|>M_0 \right).
\end{align*}
But if $|X_n^\star|>M_0$ then
$$ \inf_{|x|>M_0} V_n(x) \le V_n(X_n^\star) \le V_n(x^\star) ,$$
so necessarily we cannot have 
$$ V_n(x^\star) < 2V^\star \le \inf_{|x| > M_0} V_n(x) ,$$
so that
\begin{align}
\label{eq:proof:21}
\mathbb{P}\left(|X_n^\star|> M_0 \right) & \le \mathbb{P}(V_n(x^\star) \ge 2V^\star) + \mathbb{P}\left(\inf_{|x|>M_0} V_n(x) < 2V^\star \right) .
\end{align}
By the Tchebycheff inequality, we have
$$ \mathbb{P}(V_n(x^\star) \ge 2V^\star) \le  \mathbb{P}(|V_n(x^\star)-V^\star| \ge V^\star) \le \frac{\text{Var}[v(x^\star,Z)]}{V^\star n} .$$
And if $\inf_{|x|>M_0} V_n(x) < 2V^\star$ then using that $v \ge 0$ we have
$$ 2V^\star > \inf_{|x|>M_0} V_n(x) \ge \inf_{|x|>M_0} \frac{1}{n} \sum_{i=1}^n v(x,Z_i) \mathds{1}_{Z_i \in \mathcal{Q}} \ge \left( \inf_{|x| > M_0} \ \inf_{z \in \mathcal{Q}} v(x, z) \right) \frac{1}{n} \sum_{i=1}^n \mathds{1}_{Z_i \in \mathcal{Q}} ,$$
so necessarily $(1/n) \sum_{i=1}^n \mathds{1}_{Z_i \in \mathcal{Q}} < \kappa/2$ so using the Tchebycheff inequality again:
\begin{align*}
\mathbb{P}\left(\inf_{|x|>M_0} V_n(x) < 2V^\star \right) & \le \mathbb{P}\left(\frac{1}{n} \sum_{i=1}^n \mathds{1}_{Z_i \in \mathcal{Q}} < \frac{\kappa}{2} \right) \le \mathbb{P}\left(\left|\frac{1}{n} \sum_{i=1}^n \mathds{1}_{Z_i \in \mathcal{Q}} - \kappa \right| > \frac{\kappa}{2} \right) \le \frac{4}{n\kappa^2}.
\end{align*}
So we obtain from \eqref{eq:proof:21} that
\begin{equation}
\label{eq:proba_compact}
\mathbb{P}(|X_n^\star| > M_0) \le C/n
\end{equation}
and then
\begin{align*}
\mathbb{E}|(V_n(X_n^\star)-V^\star) \mathds{1}_{|X_n^\star|>M_0}| & \le \mathbb{E}|V_n(X_n^\star)\mathds{1}_{|X_n^\star|>M_0}| + V^\star \mathbb{P}\left(|X_n^\star|>M_0 \right) \\
& \le C\sqrt{\mathbb{E}[V_n^2(X_n^\star)]}n^{-1/2} + Cn^{-1} \\
& \le C \text{Var}[v(x^\star,Z)] n^{-1} + Cn^{-1}.
\end{align*}
\end{proof}

%\begin{lemma}
%Let $R>0$ be large enough. Then almost surely, there exists $N \in \mathbb{N}$ such that for all $n \ge N$, $|X_n^\star| \le R$.
%\end{lemma}
%\begin{proof}
%Let $r>0$ such that $\kappa := \mathbb{P}(Z \in \mathcal{Q}) > 0$. Using that $v \ge 0$, for every $x \in \mathbb{R}^d$ we have
%\begin{align*}
%V_n(x) & = \frac{1}{n} \sum_{i=1}^n v(x,Z_i) \ge \frac{1}{n} \sum_{i=1}^n  v(x, Z_i)\mathds{1}_{Z_i \in \mathcal{Q}} \\
%& \ge \left(\inf_{z \in \mathcal{Q}} v(x, z) \right) \frac{1}{n} \sum_{i=1}^n \mathds{1}_{Z_i \in \mathcal{Q}}.
%\end{align*}
%But by the law of large numbers, almost surely for $n$ large enough we have
%$$ \frac{1}{n} \sum_{i=1}^n \mathds{1}_{Z_i \in \mathcal{Q}} \ge \kappa/2 .$$
%Using the assumption ... (thm 2.), we can choose $R>0$ large enough such that
%$$ \inf_{|x| > R} \ \inf_{z \in \mathcal{Q}} v(x, z) > 4V^\star/\kappa $$
%so that if $|x| >R$, almost surely for large enough $n$ we have
%$$ V_n(x) > 2V^\star \ge V_n(x^\star), $$
%where we used that by the law of large numbers, $V_n(x^\star) \to V(x^\star)$ almost surely. Since $X_n^\star$ is a minimizer of $V_n$, we have $V_n(X_n^\star) \le V_n(x^\star)$ so necessarily, almost surely for large enough $n$ we have $|X_n^\star| \le R$.
%\end{proof}

We now prove Theorem \ref{thm:main}.
\begin{proof}
Let us define the compact sets:
$$ \Theta = \textbf{B}(0,M_0) \subset \mathbb{R}^d, \quad \mathcal{K}_n := \textbf{B}(0,\beta_n) \subset \mathbb{R}^q, \quad \widetilde{\mathcal{K}}_n := \textbf{B}(0,\beta_n+1) \subset \mathbb{R}^q ,$$
where $(\beta_n)$ is a positive increasing sequence that we shall define later. For $n \in \mathbb{N}$ we also define the continuous function
$$ \phi_n : \mathbb{R}^q \to \mathbb{R}, \quad z \mapsto \left\lbrace \begin{array}{ll}
1 & \text{if } z \in \mathcal{K}_n, \\
0 & \text{if } z \notin \widetilde{\mathcal{K}_n}, \\
\beta_n + 1 - |z| & \text{if } z \in \widetilde{\mathcal{K}_n} \setminus \mathcal{K}_n.
\end{array} \right. $$
Then we have
\begin{align}
\label{eq:proof:11}
\mathbb{E}|V_n(X_n^\star)-V^\star| & = \mathbb{E}|(V_n(X_n^\star)-V^\star) \mathds{1}_{|X_n^\star|>M_0}| + \mathbb{E}|(V_n(X_n^\star)-V^\star)\mathds{1}_{|X_n^\star| \le M_0}|.
\end{align}
Using Lemma \ref{lemma:2}, the first term is bounded by $Cn^{-1}$.
We now turn to the second term of \eqref{eq:proof:11}:
\begin{align}
\sup_{x \in \Theta} |V_n(x) - V(x)| & = \sup_{x \in \Theta} \left| \int_{\mathbb{R}^q} v(x,z) \mu_n(dz) - \int_{\mathbb{R}^q} v(x,z) \mu(dz) \right| \nonumber \\
& = \sup_{x \in \Theta} \left| \int_{\mathbb{R}^q} v(x,z) \phi_n(z) (\mu_n-\mu)(dz) + \int_{\mathbb{R}^q} v(x,z)(1-\phi_n(z)) (\mu_n-\mu)(dz) \right| \nonumber \\
& \le \left( \sup_{x \in \Theta} [(v(x,\cdot) \phi_n)]_{\text{Lip}} \right) \mathcal{W}_1(\mu_n,\mu) + \sup_{x \in \Theta} \int_{z \notin \mathcal{K}_n} v(x,z) \mu(dz) \nonumber \\
\label{eq:proof:1}
& \quad + \sup_{x \in \Theta} \int_{z \notin \mathcal{K}_n} v(x,z) \mu_n(dz).
\end{align}
Using the assumption \eqref{eq:hyp1}, we have
$$ \sup_{x \in \Theta} \int_{z \notin \mathcal{K}_n} v(x,z) \mu(dz) = \sup_{x \in \Theta} \mathbb{E}[v(x,Z) \mathds{1}_{z \notin \mathcal{K}_n}] \le Ce^{-C\beta_n^2}. $$
Using the assumption \eqref{eq:hyp2}, we have
$$ \sup_{x \in \Theta} [(v(x,\cdot) \phi_n)]_{\text{Lip}} \le Ce^{C\beta_n} .$$

Then taking the expectation over $Z_1$, $\ldots$, $Z_n$ and remarking that
$$ \mathbb{E}\left[\sup_{x \in \Theta} \int_{z \notin \mathcal{K}_n} v(x,z) \mu_n(dz) \right] = \sup_{x \in \Theta} \int_{z \notin \mathcal{K}_n} v(x,z) \mu(dz), $$
using Lemma \ref{lemma:1} and Theorem \ref{thm:fournier} we obtain
\begin{align*}
\mathbb{E}|V_n(X_n^\star)-V^\star| & \le \mathbb{E}\sup_{x \in \Theta} |V_n(x) - V(x)| \le Ce^{C\beta_n} \mathbb{E}[\mathcal{W}_1(\mu_n,\mu)] + Ce^{-C\beta_n^2} \le Ce^{C\beta_n}n^{-1/q} + Ce^{-C\beta_n^2}.
\end{align*}
We now choose $\beta_n = D\sqrt{\log(n)}$ with $D>0$ so that
$$ \mathbb{E}|V_n(X_n^\star)-V^\star| \le Ce^{C'\sqrt{\log(n)}}n^{-1/q} + Cn^{-CD^2} .$$
Choosing $D>0$ large enough yields
$$ \mathbb{E}|V_n(X_n^\star)-V^\star| \le Ce^{C'\sqrt{\log(n)}}n^{-1/q} .$$

\end{proof}

\subsection{Proof of Theorem \ref{thm:main}(b)}

\begin{proof}
We define $\Theta = \textbf{B}(0,M_0) \subset \mathbb{R}^d$. Since $\nabla^2 V(x^\star)$ is definite positive, there exists $r>0$ such that for all $x \in \mathbf{B}(x^\star,r)$, $\nabla^2 V(x)$ is definite positive and we define
$$ \lambda_{\min} := \min \left\lbrace \lambda : \ \lambda \text{ eigenvalue of } \nabla^2 V(x), \ x \in \mathbf{B}(x^\star,r) \right\rbrace > 0 .$$
Then we have
\begin{align*}
\mathbb{E}|X_n^\star - x^\star|^2 & = \mathbb{E}|(X_n^\star - x^\star)\mathds{1}_{X_n^\star \notin \Theta}|^2 + \mathbb{E}|(X_n^\star - x^\star)\mathds{1}_{X_n^\star \in \Theta \cap \textbf{B}(x^\star,r)^c}|^2 + \mathbb{E}|(X_n^\star - x^\star)\mathds{1}_{X_n^\star \in \Theta \cap \textbf{B}(x^\star,r)}|^2 \\
& =: E_1 + E_2 + E_3.
\end{align*}
And using \eqref{eq:proba_compact} we have
$$ E_1 \le (\mathbb{E}|X_n^2| + |x^\star|) \mathbb{P}(X_n^\star \notin \Theta) \le C/n $$
For the second term, since $x^\star$ is the unique minimizer of $V$, we have $\beta := \inf_{x \notin \mathbf{B}(x^\star,r)} (V(x) - V(x^\star)) > 0$ and then
\begin{align*}
E_2 & \le \left(M_0+|x^\star|\right)^2 \mathbb{P}(X_n^\star \notin \textbf{B}(x^\star,r)) \le \frac{\left(M_0+|x^\star|\right)^2}{\beta} \mathbb{E}[ (V(X_n^\star)-V^\star) \mathds{1}_{X_n^\star \notin \textbf{B}(x^\star,r)}] \\
& \le C\mathbb{E}[V(X_n^\star)-V(x^\star)] = C\mathbb{E}[V_n(X_n^\star)-V(x^\star)] \le Ca_n \mathcal{R}_q(n).
\end{align*}

For the third term, by a Taylor expansion there exists some $\xi_n \in (X_n^\star, x^\star)$ such that
$$ V(X_n^\star) = V(x^\star) + \frac{1}{2} \nabla^2 V(\xi_n) \cdot (X_n^\star - x^\star)^{\otimes 2} .$$
So that
\begin{align*}
\nabla^2 V(\xi_n) \cdot (X_n^\star - x^\star)^{\otimes 2} & = 2(V(X_n^\star)-V(x^\star)) \le 2(V(X_n^\star) - V_n(X_n^\star) + V_n(x^\star) - V(x^\star)) \\
& \le 4 \sup_{x \in \Theta} |V_n(x) - V(x)|
\end{align*}
and then
\begin{align*}
E_3 \le \mathbb{E} \left[\nabla^2 V(\xi_n) \cdot (X_n^\star - x^\star)^{\otimes 2} \right] \le \frac{4}{\lambda_{\min}} \mathbb{E} \sup_{x \in \Theta} |V_n(x) - V(x)| \le C a_n \mathcal{R}_q(n).
\end{align*}
\end{proof}

%\bibliographystyle{alpha}
%\bibliography{biblio}

\end{document}